\numberwithin{equation}{section}
\theoremstyle{plain}
\newtheorem{theorem}{Theorem}[section]
\newtheorem{lemma}[theorem]{Lemma}
\newtheorem{proposition}[theorem]{Proposition}
\newtheorem{definition}[theorem]{Definition}
\newtheorem{remark}[theorem]{Remark}
\renewcommand{\maketitle}{
	\begin{center}
		{\Large\bfseries{\@title}\par}
		\vskip 1em
		{\normalsize
			\lineskip .5em
			\begin{tabular}[t]{c}
				\@author
			\end{tabular}\par}
		\vskip 1.5em
	\end{center}
}
\renewenvironment{abstract}{
	\begin{adjustwidth}{1.3cm}{1.3cm}
		\noindent{\large\bfseries{A{\scriptsize BSTRACT.}}}
	}{
	\end{adjustwidth}
}
\newcommand{\largerfirstscripts}[1]{%
	\StrLeft{#1}{1}[\firstletter]%
	\StrGobbleLeft{#1}{1}[\restofword]%
	{%
		\scalebox{0.96}{\MakeUppercase{\firstletter}}%
		{\scriptsize\MakeUppercase{\restofword}}%
	}%
}
\begin{document}
	
	\title{ARITHMETIC PROPERTIES OF PARTITIONS WITH 1-COLORED EVEN PARTS AND r–COLORED ODD PARTS}
	
	\author{ M. P. Thejitha and S. N. Fathima}
	
	\maketitle
	
	\begin{abstract}
Recently, Hirschhorn and Sellers defined the partition function $a_r(n)$, which counts the number of partitions of $n$ wherein even parts come in only one color, while the odd parts may appear in one of $r$-colors for fixed $r\ge1$. The aim of this paper is to prove several new infinite families of congruences modulo 3 and 5 by employing a result of Newman and theory of modular forms. \\

	\noindent {\bf \small Keywords:} Partitions, Congruences, Newman's identity, Modular forms.\\
		
		\noindent {\bf \small Mathematics Subject Classification (2020):} 05A17, 11P83.
	\end{abstract}
	\vspace{0.5em}
	\section{INTRODUCTION}	
A partition of a positive integer $n$ is a finite non-increasing sequence of positive integers $\lambda_1\ge\lambda_2\ge...\ge\lambda_r\ge1$	whose sum equals $n$ (see {{\cite[A000041]{onen}}}). Each $\lambda
_i$ is called a part of the partition. Let $p(n)$ denote the number of partitions of $n$, with the usual convention that $p(0)=1$. For example, the seven partitions of 5 are 
\begin{align*}
	(5), (4,1), (3,2), (3,1,1), (2,2,1), (2,1,1,1), (1,1,1,1,1).
\end{align*}
Hence, $p(5)=7$. The generating function of $p(n)$ is given by (see \cite{andr}),
\begin{align*}
	\sum_{n=0}^{\infty}p(n)q^n=\dfrac{1}{(q;q)_\infty}\; ,
\end{align*}
where the standard $q$-Pochhammer symbol $(a;q)_\infty$ is given by
\begin{align*}
	(a;q)_\infty = \prod_{k=0}^\infty(1-aq^{k}).
\end{align*}
We assume that $a$ and $q$ are complex numbers with $|q|<1$. Throughout this paper, for any integer $k\ge1$, we set
\begin{align*}
	f_k:= (q^k;q^k)_\infty \;.
\end{align*}
\indent Recently, Amdeberhan and Merca \cite{am1} considered the colored partition function $a(n)$ which counts the number of partitions of $n$ in which the odd parts is assigned one of three colors (see {{\cite[8298311]{onen}}}). For example, $a(2)=7$, since the partitions in question are: 
\begin{align*}
	(2), (1_1,1_1), (1_2,1_2), (1_3,1_3), (1_1,1_2), (1_1,1_3), (1_2,1_3).
\end{align*}
The generating function for $a(n)$ is given by 
\begin{align*}
	\sum_{n=0}^{\infty}a(n)q^n=\dfrac{f_2^2}{f_1^3}\; .
\end{align*}

Further, Amdeberhan and Merca have used mathematica package RaduRK developed by Smoot \cite{sm1} to prove the following generating function for $a(7n+2)$.

\begin{theorem}[{{\cite[Theorem~6]{am1}}}]\label{t 1.1} For $|q|<1$, we have
	\begin{align*}
		\sum_{n=0}^{\infty}a(7n+2)q^n&=7\displaystyle\left(\frac{1024f_2^8f_{14}^{18}}{f_1^{20}f_7^7}q^8+\frac{1344f_2^9f_{14}^{11}}{f_1^{21}}q^6-\frac{1024f_2^{16}f_{14}^{10}}{f_1^{24}f_7^3}q^5+\frac{72f_2^{10}f_7^7f_{14}^4}{f_1^{22}}q^4\right.\nonumber \\
		&\quad\left.-\frac{320f_2^{17}f_7^4f_{14}^3}{f_1^{25}}q^3-\frac{40 f_2^{11}f_7^{14}}{f_1^{23}f_{14}^3}q^2+\frac{56 f_2^{18}f_7^{11}}{f_1^{26}f_{14}^4}q+\frac{f_2^{12}f_7^{21}}{f_1^{24}f_{14}^{10}}\right).
	\end{align*}
\end{theorem}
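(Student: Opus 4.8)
The plan is to establish this 7-dissection identity for the generating function of $a(7n+2)$ directly by manipulating the base generating function $\sum_{n\ge0}a(n)q^n = f_2^2/f_1^3$ using standard $q$-series dissection techniques. The strategy begins with the classical $7$-dissection of the relevant infinite products. Specifically, I would first recall (or derive) the $7$-dissection of $1/f_1^3$ and of $f_2^2$, or more efficiently, work with known $7$-dissections of the Rogers--Ramanujan type building blocks. The key tool is the well-known factorization of $f_1$ in terms of $q^7$-series, which expresses $f_1 = f_{49}\sum_{k} (\text{terms involving } q^{a(k)})$ via the septic analogue of Euler's product; more practically, one uses the $7$-dissection of $f_1/f_7$ and the ``$7$-core'' generating function $f_7^3/f_1$.

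The central steps, in order, would be as follows. First, I would write $f_2^2/f_1^3$ and separate the exponents modulo $7$. Since the package RaduRK cited by the authors is an algorithmic implementation of Radu's Ramanujan--Kolberg algorithm, the honest ``by hand'' route is to reduce everything to eta-quotients on $\Gamma_0(N)$ with $N$ divisible by $14$, and to identify $\sum_{n\ge0} a(7n+2)q^{7n+2}$ as a modular form whose expression in terms of $f_1,f_2,f_7,f_{14}$ is forced by matching. Concretely, I would extract the arithmetic progression $7n+2$ by using roots of unity: form $\frac{1}{7}\sum_{j=0}^{6}\zeta^{-2j} F(\zeta^j q^{1/7})$ where $\zeta=e^{2\pi i/7}$ and $F(q)=f_2^2/f_1^3$, then simplify the resulting product over conjugates into a rational eta-quotient in $q$. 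The appearance of the modulus $14$ and the levels $2,7,14$ in the claimed right-hand side is exactly what one expects from the even/odd structure of $F$.

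The main obstacle will be the explicit simplification of the $7$-dissection of the cube $1/f_1^3$: unlike $1/f_1$, the reciprocal cube does not dissect as cleanly, and the cross terms proliferate. I expect the bulk of the work to lie in collecting the eight surviving terms with coefficients $1024,1344,1024,72,320,40,56,1$ (times $7$) and verifying that the stray powers of $f_7$ and $f_{14}$ combine into the stated eta-quotients. A cleaner alternative, which I would pursue if the direct dissection becomes unwieldy, is verification rather than derivation: since both sides are (after multiplying through by a suitable power of $f_1$) holomorphic modular forms of a computable weight on $\Gamma_0(14)$, it suffices by the valence formula / Sturm bound to check that the $q$-expansions of the two sides agree up to an explicit finite order. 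This reduces the theorem to a finite computation, and indeed this is precisely the logic that RaduRK automates.

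Finally, I would double-check the boundary behavior and the normalization constant $7$: the factor $7$ reflects that $a(7n+2)$ is divisible by $7$, so one should confirm that the right-hand side, as written, already has the overall $7$ pulled out and that the remaining eta-quotient has integer $q$-expansion. The hard part, to restate, is managing the algebra of the reciprocal-cube dissection; once the candidate identity is in hand, promoting it to a proof is routine via the Sturm-bound verification on $\Gamma_0(14)$.
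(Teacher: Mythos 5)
The paper does not prove this theorem at all: it is quoted verbatim from Amdeberhan and Merca, who established it using Smoot's RaduRK implementation of Radu's Ramanujan--Kolberg algorithm, i.e.\ precisely the ``identify both sides as modular objects on $\Gamma_0(14)$ and verify agreement of $q$-expansions up to a finite (Sturm/valence) bound'' logic that you describe as your fallback route. Your proposal is therefore sound and essentially coincides with the approach of the cited source; the only caveat is that your primary hand-dissection route via roots of unity would be far messier than you suggest, and the claim that the extracted series lives on $\Gamma_0(14)$ (rather than on a level involving $49$) is itself the nontrivial content of Radu's theorem, so the algorithmic verification is the right way to go.
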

\noindent As a corollary of Theorem \ref{t 1.1}, they have obtained the following arithmetic congruence modulo 7
\begin{align*}
	a(7n+2)\equiv 0\pmod 7	.
\end{align*}
\indent Very recently, Hirschhorn and Sellers \cite{hs1} have considered the generalization of partition function $a(n)$. For $r\ge1$, they defined $a_r(n)$ that counts the partition function of $n$ in which odd parts is assigned one of $r$ colours. The generating function of $a_r(n)$ is given by
\begin{align}\label{e1.1}
	\sum_{n=0}^{\infty}a_r(n)q^n=\dfrac{f_2^{r-1}}{f_1^r}\; .\end{align}
Clearly, $a_1(n)=p(n)$, $a_2(n)=\bar{p}(n)$ (the number of overpartition of $n$ \cite{cl1}, {{\cite[A015128]{onen}}}) and $a_3(n)=a(n)$.
Also, Hirchhorn and Sellers \cite{hs1} have employed theta function identities and $q$- series manipulation to prove arithmetic congruences modulo 7 for $a_k(n)$. Their results are stated in the following theorem.
\begin{theorem}[{{\cite[Theorem~1.3]{hs1}}}]
	For all $n\ge0$,
	\begin{align*}
		a_1(7n+5)\equiv 0 \pmod{7}\\
		a_3(7n+2)\equiv 0 \pmod{7}\\
		a_4(7n+4)\equiv 0 \pmod{7}\\
		a_5(7n+6)\equiv 0 \pmod{7}\\
		a_7(7n+3)\equiv 0 \pmod{7}.
	\end{align*}
	
\end{theorem}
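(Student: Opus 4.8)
The plan is to isolate, for each $r\in\{1,3,4,5,7\}$, the relevant arithmetic progression modulo $7$ and to show that the corresponding coefficients are divisible by $7$ using little more than the Jacobi identity together with the behaviour of quadratic residues modulo $7$.

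First I would reduce everything to a finite-exponent theta product. Since $(1-q^{km})^{7}\equiv 1-q^{7km}\pmod 7$, one has $f_1^{7}\equiv f_7$ and $f_2^{7}\equiv f_{14}\pmod 7$, so that for $1\le r\le 7$,
\begin{align*}
\frac{f_2^{\,r-1}}{f_1^{\,r}}=\frac{f_2^{\,r-1}f_1^{\,7-r}}{f_1^{7}}\equiv \frac{g_r}{f_7}\pmod 7,\qquad g_r:=f_1^{\,7-r}f_2^{\,r-1}.
\end{align*}
Because $1/f_7$ is a power series in $q^7$, extracting a residue class $j$ modulo $7$ on the left is the same as extracting it from $g_r$; thus $a_r(7n+j)\equiv0\pmod 7$ for all $n$ is equivalent to the assertion that every coefficient of $q^m$ in $g_r$ with $m\equiv j\pmod 7$ vanishes modulo $7$. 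The five statements correspond to the pairs $(r,j)=(1,5),(3,2),(4,4),(5,6),(7,3)$, so that $g_1=f_1^{6}$, $g_3=f_1^{4}f_2^{2}$, $g_4=f_1^{3}f_2^{3}$, $g_5=f_1^{2}f_2^{4}$, and $g_7=f_2^{6}$.

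Next I would dispatch the three clean cases $r=1,4,7$. Writing Jacobi's identity $f_1^{3}=\sum_{k\ge0}(-1)^k(2k+1)q^{k(k+1)/2}$ and its dilation $f_2^{3}=\sum_{k\ge0}(-1)^k(2k+1)q^{k(k+1)}$, one obtains for $g_1=(f_1^{3})^2$ and $g_4=f_1^{3}f_2^{3}$ double sums in which the exponent becomes, after multiplying by $8$ and using $8\cdot\tfrac{k(k+1)}{2}+1=(2k+1)^2$, a binary quadratic form in the odd integers $u=2j+1$ and $v=2k+1$. For $g_1$ the condition $m\equiv5$ turns into $u^2+v^2\equiv0\pmod 7$, and for $g_4$ the condition $m\equiv4$ turns into $u^2+2v^2\equiv0\pmod 7$. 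The decisive arithmetic fact is that $-1$ and $-2$ are quadratic non-residues modulo $7$; hence in either case a solution forces $u\equiv v\equiv0\pmod 7$, so that the weight $(2j+1)(2k+1)$ of every surviving term is divisible by $7$ (indeed by $49$). This yields $a_1(7n+5)\equiv0$ and $a_4(7n+4)\equiv0$. The case $r=7$ then comes for free: since $g_7(q)=f_2^{6}$ is $g_1$ with $q$ replaced by $q^{2}$, a coefficient of $q^{m}$ with $m\equiv3\pmod 7$ equals a coefficient of $g_1$ at an index $M$ with $2M\equiv3$, i.e.\ $M\equiv5\pmod 7$, which is divisible by $7$ by the $r=1$ case.

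The remaining cases $r=3,5$, with $g_3=f_1^{4}f_2^{2}$ and $g_5=f_1^{2}f_2^{4}$, are the main obstacle, because these weight-$3$ eta products do not split into a product of two single theta series the way $g_1$ and $g_4$ do. Here I would first use a $2$-dissection to decouple the $f_1$ and $f_2$ factors, via $f_2^{2}=f_1\,\psi(q)$ and $f_1^{2}=f_2\,\varphi(-q)$, where $\psi(q)=\sum_{n\ge0}q^{n(n+1)/2}$ and $\varphi(-q)=\sum_{n}(-1)^nq^{n^2}$, in order to rewrite $g_3$ and $g_5$ as combinations of products of the classical theta functions $\varphi$ and $\psi$ (together with factors that are pure powers of $q^{7}$ modulo $7$). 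Extracting the progression $7n+2$, respectively $7n+6$, should then again reduce, after completing the square, to asking whether a fixed residue is a quadratic non-residue modulo $7$; I expect the forms that appear to be $u^2+2v^2$ and $2u^2+v^2$, whose target values $3$ and $5$ are non-residues, forcing the same collapse $7\mid(2k+1)$. Verifying that the dissection lands precisely on these non-residues, and that no ``diagonal'' term escapes the collapse, is the step that requires care; once it is in place, all five congruences follow uniformly from the single observation that $-1$, $-2$, and $-4$ are non-residues modulo $7$.
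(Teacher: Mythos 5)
First, note that this theorem is not proved in the paper at all: it is quoted from Hirschhorn and Sellers \cite{hs1}, who (as the paper says) prove it by theta-function identities and $q$-series manipulation. So your proposal can only be measured against that cited method and against correctness. Your reduction $\sum_n a_r(n)q^n\equiv f_1^{7-r}f_2^{r-1}/f_7\pmod 7$ is correct, and your treatment of the three cases $r=1,4,7$ is complete and sound: for $f_1^6$ and $f_1^3f_2^3$ the Jacobi expansion turns the exponent condition into $u^2+v^2\equiv0$ and $u^2+2v^2\equiv0\pmod 7$ with $u=2j+1$, $v=2k+1$, and since $-1$ and $-2$ are non-residues modulo $7$ every solution has $7\mid u$, $7\mid v$, so each term's weight $uv$ is divisible by $49$; the case $r=7$ follows by dilation since $2^{-1}\cdot 3\equiv 5\pmod 7$. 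This is exactly the classical Ramanujan-style argument and is in the spirit of the cited proof.

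The cases $r=3,5$, however, contain a genuine gap, and it is not merely a matter of ``care'': the mechanism you propose provably cannot work there. The collapse trick requires a \emph{binary} quadratic form, i.e.\ a factorization of $g_r$ into exactly two single-sum theta series carrying the linear weights. For $g_3=f_1^4f_2^2$ the only candidates are $g_3=f_1^3\cdot(f_1f_2^2)$ or $g_3=f_2^3\cdot(f_1^4/f_2)$, and neither cofactor is a single-sum theta: for instance $f_1f_2^2=1-q-3q^2+2q^3+q^4+2q^5+3q^6+\cdots$ is supported on every small exponent, so it is not lacunary (indeed, it is precisely the weight-$3/2$ cusp form $c(n)$ whose Hecke-type recurrence of Newman the present paper invokes in its proof of Theorem 1.3 --- structure of a very different kind from a theta expansion). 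Your suggested route via $f_2^2=f_1\psi(q)$ and $f_1^2=f_2\varphi(-q)$ leads, e.g., to $g_3=f_2\,\varphi(-q)^3\psi(q)^2$, a product of \emph{six} single sums whose terms all have coefficients $\pm1$; term-by-term divisibility by $7$ is then impossible, and more generally any splitting into three or more single sums fails because a quadratic form in at least three variables over $\mathbb{F}_7$ always has nontrivial zeros (Chevalley--Warning), so some ``diagonal'' terms always escape. One can see the failure concretely already at $n=0$ of the progression $7n+2$: in $g_3=f_1^3\cdot(f_1f_2^2)$ the coefficient of $q^2$ is
\begin{align*}
1\cdot(-3)+(-3)\cdot(-1)+0\cdot 1=0,
\end{align*}
i.e.\ the divisibility comes from \emph{cancellation} between terms ($-3$ and $+3$) that are individually nonzero modulo $7$, not from each term vanishing. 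Hence $r=3$ and $r=5$ require exact dissection identities (or Hecke/eigenform arguments), which is precisely the extra work in Hirschhorn--Sellers; your proposal as written establishes only the congruences for $a_1$, $a_4$, and $a_7$.
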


The main purpose of this paper is to extend their lists and prove several more congruences for $a_s(n)$ modulo 3 and 5. We first collect generating functions, basic definitions, and recall some facts on modular forms. Our paper is of two folds. Firstly, in Section \ref{s3} and \ref{s4}, by employing a theorem of Newman \cite{new62}, we provide proofs of congruences modulo 5 for $a_3(n)$ and congruences modulo 3 for $a_t(n)$ for $t\in\{5,8,11,14,17,20,23,26\}$, respectively. Secondly, in Section \ref{s5}, we conclude by further obtaining congruences  modulo 3 and 5 for $a_5(n)$ using Sturm's result.

	\section{Preliminaries}
In this section, we begin with the following definition of Legendre symbol denoted by $\displaystyle\left(\frac{n}{p}\right)_L$ . 
\begin{definition}
	Let $n$ be an integer and $p$ be an odd prime. The Legendre symbol is defined by
	\[
	\left( \frac{a}{p} \right)_L :=
	\begin{cases}
		\hphantom{-}1, & \text{if } n \text{ is a quadratic residue modulo}\;p \;\text{and} \;p\nmid n,\\[4pt]
		\hphantom{-}0, & \text{if } p \mid n,\\[4pt]
		-1, & \text{if } n \text{ is a non-quadratic residue modulo } p.
	\end{cases}
	\]
\end{definition}
We now note the following pivotal result of Newman \cite{new62}.	
\begin{lemma}[{{\cite[Theorem~3]{new62}}}]\label{l2.2}
	Let $p$ and $q$ be distinct primes. The values of integers $r$ and $s$ such that $rs\not=0$ and $r\not\equiv s\pmod2$ are those given in the table \cite{new62}, where with the entry $(r,s)$ we must also include $(s,r)$. We set
	\begin{align*}
		\phi(\tau)&=\prod_{n=1}^{\infty}(1-x^n)^r(1-x^{nq})^s=\sum_{n=0}^{\infty}c(n)x^n,\\
		(\epsilon, t)&= \displaystyle\left(\dfrac{(r+s)}{2}, \dfrac{(r+sq)}{24}\right).
	\end{align*}
	For these values of $r,s,$ and $q$, the coefficients $c(n)$ of $\phi(\tau)$ satisfy
	\begin{align*}
		c(np^2+\Delta)-\gamma c(n)+p^{2\epsilon-2}c\displaystyle\left(\dfrac{n-\Delta}{p^2}\right)=0\;,
	\end{align*}
	where 
	\begin{align*}
		\gamma=p^{2\epsilon-2}\alpha-\displaystyle\left(\frac{\theta}{p}\right)_L p^{\epsilon-3/2}\displaystyle\left(\frac{n-\Delta}{p}\right)_L,
	\end{align*}
	$\theta=(-1)^{1/2-\epsilon} 2q^s$, $\Delta=t(p^2-1)$, and
	$\alpha$ is a constant.
\end{lemma}
 \noindent Our proofs in Section 3 and 4 relies heavily on Lemma \ref{l2.2}.\\
\indent We now recall some basic definitions and facts on modular forms. For more details, we refer the reader to \cite{knop}, \cite{kmf}, \cite{owom}, \cite{rmf}.\\
We start with definition of upper half of the complex plane $\mathbb{C}$. Let $\mathbb{H}=\{z: Im(z)>0\}$.\\
 \indent A subgroup $\Gamma$ of $SL_2(\mathbb{Z})$ is called a congruence subgroup if $\Gamma(N)\subseteq \Gamma$ for some $N$, and the smallest $N$ with this property is called the level of $\Gamma$. For a positive integer $N$, the congruence subgroup $\Gamma_0(N)$ of $SL_2(\mathbb{Z})$ is defined by
\begin{align*}
	\Gamma_0(N)&:= 
	\left\{
	\begin{bmatrix}
		a & b \\
		c & d
	\end{bmatrix} \in SL_2(\mathbb{Z})
	\,:\, c \equiv 0\!\!\!\! \pmod N 
	\right\}.
\end{align*}
We note that $\Gamma_0(N)$ is a congruence subgroup of level $N$, and $\gamma=\begin{bmatrix}
	a & b \\
	c & d
\end{bmatrix}$ acts on $\mathbb{H}$ by bilinear transformation defined by $\gamma z:=\dfrac{az
	+b}{cz+d} $.\\
\indent If $f(z)$ is a function on $\mathbb{H}$, which satisfies $f(\gamma z)=\chi(d)(cz+d)^kf(z)$, where $\chi$ is a Dirichlet character modulo 	$N$, and $f(z)$ is holomorphic on $\mathbb{H}$ and at all the cusps of $\Gamma_0(N)$, then we call $f(z)$ a holomorphic modular form of weight $k$ with respect to $\Gamma_0(N)$ and character $\chi$. The set of all holomorphic modular forms of weight $k$ with respect to $\Gamma_0(N)$ and character $\chi$ is denoted by $M_k(\Gamma_0(N), \chi)$.\\
\indent The relevant modular forms for the results of this paper arise from eta-quotients and Eienstein series of weight 4. Throughout, let $q=e^{2\pi iz}$.\\
The Dedekind's eta-function $\eta(z)$ is the simplest Euler product, and non-vanishing holomorphic function on $\mathbb{H}$ given by
\begin{align}\label{2.1}
	\eta(z):=q^{1/24}(q;q)_\infty=q^{1/24}\prod_{n=0}^\infty(1-q^{n}).
\end{align}
We note that \eqref{2.1} is a half-weight modular form with a certain multiplier on $SL_2(\mathbb{Z})$.
The relevant modular forms for the results obtained in our paper arise from eta-quotients. A function is called an eta-quotient if it is of the form
\begin{align*}
	f(z)=\prod_{\delta\mid N}\eta(\delta z)^{r_\delta},	
\end{align*}
where $N$, $\delta$ are positive integers and $r_\delta$ is an integer. Also, recall the classical Eisenstein series
\begin{align*}
	E_4(z)=1+240	\sum_{n=1}^{\infty}\sigma_3(n)q^n,	
\end{align*}
where $\sigma_3(n):=\sum_{d>0,\;d\mid n}d^3$. Now, we recall an important result due to Gordon-Hughes \cite{gh}, Newman \cite{new 2}, and
Ligozat \cite{lig} which is useful to verify whether an eta-quotient is a modular form. If $f(z)$ is an eta-quotient associated with positive integer weight $k$ satisfying \eqref{2.3}, \eqref{2.4} and is holomorphic at all the cusps of $\Gamma_0(N)$, then $f(z) \in M_k(\Gamma_0(N), \chi )$. The condition \eqref{2.5} gives the criteria to confirm that $f(z)$ has non-negative orders at the cusps. 
\begin{theorem}[{{\cite[Theorem~1.64 and 1.65]{owom}}}]\label{t2.1}
	If $ f(z)= \prod_{\delta \mid N} \eta(\delta z)^{r_\delta}$ is an eta-quotient with
	$k= \frac{1}{2} \sum_{\delta \mid N}{r_\delta} \in \mathbb{Z}$, and satisfies the following additional properties:
	\begin{align}\label{2.3}
		\sum_{\delta\mid N} \delta {r_\delta} \equiv 0 \pmod {24},
	\end{align}
	\begin{align}\label{2.4}
		\sum_{\delta \mid N} \frac{N}{\delta}  {r_\delta} \equiv 0 \pmod {24},
	\end{align}
	\begin{align}\label{2.5}
		\sum_{\delta\mid N} \frac{gcd(d,\delta)^2 r_\delta}{ \delta} \ge 0,		
	\end{align}
	then $f(z)\in M_k(\Gamma_0(N), \chi)$ where the character $\chi$ is defined by
	\begin{align*}
		\chi (d) := \bigg( \frac{(-1)^k \prod_{\delta \mid N} \delta^{r_\delta}}{d} \bigg).
	\end{align*}
\end{theorem}
We also require a result of Sturm \cite{sb}, which gives  criteria to determine whether two modular forms with integer coefficients are congruent modulo a given prime.
Let $m$ be a positive integer and $f(z)=\sum_{n=0}^{\infty}a(n)q^n$ with integer coefficients. The $ord_m
(f(z))$ is defined by  $ord_m
(f(z))=inf\{n\mid c(n)\not \equiv0 \pmod m\}$.
\begin{theorem}[{{\cite[Theorem~2.58]{owom}}}]\label{t2.2}
	Suppose p be a prime and $f(z)=\sum_{n=0}^{\infty}a(n)q^n$ and $g(z)=\sum_{n=0}^{\infty}b(n)q^n$ such that $f(z)$, $g(z) \in M_k(\Gamma_0(N), \chi)\cap \mathbb{Z}[[q]]$. If 
	\begin{align*}
		ord_p(f(z)-g(z))>\dfrac{kN}{12}\prod_{t\;prime, \;t\mid N}\bigg(\frac{1+t}{t}\bigg),
	\end{align*}
	then $ord_p(f(z)-g(z))=\infty$, $($which implies $a(n)\equiv b(n)\pmod p$$)$. 
\end{theorem}
Hecke  operators are linear transformations which act on spaces of modular forms and play a crucial role in our proof. We recall the definition of the Hecke operators on spaces of integer weight modular forms.\\
\begin{definition}[{{\cite[Definition~2.1]{owom}}}]
	Let $m$ be a positive integer and $f(z) = \sum_{n=0}^ \infty a(n)q^n \in  M_k(\Gamma_0(N), \chi ).$ The Hecke operator $T_m$ acts on $f(z)$ by 
	\begin{align}
		f(z)\mid {T_m} := \sum_{n=0}^\infty \bigg( \sum_{d\mid gcd(n,m)} \chi (d) d^{k-1}a \bigg(\frac{nm}{d^2} \bigg) \bigg)q^n.
	\end{align}
	In particular, if $m=p$ is a prime, then 
	\begin{align}\label{2.7}
		f(z)\mid {T_p} := \sum_{n=0}^\infty \bigg( a(pn)+ \chi (p) p^{k-1}a \bigg(\frac{n}{p} \bigg) \bigg)q^n.
	\end{align}
	This implies that, if $k>0$, then
	\begin{align*}
		f(z)\mid {T_p}\equiv \sum_{n=0}^\infty a(pn)\pmod p.
	\end{align*}
	We adopt the convention that $a(n/p)=0$ whenever $p\nmid n$.
\end{definition}

The following proposition is a direct consequence of \eqref{2.7} and definition of Hecke operator. 
\begin{proposition}\label{p2.4}
	Suppose that $p$ is prime, $f(z)\in\mathbb{Z}[[q]]$, $g(z)\in\mathbb{Z}[[q^p]]$, and $k>1$. Then 
	\begin{align*}
		(fg\mid T_{p,k,\chi})(z) \equiv (f\mid T_{p,k,\chi})(z)\cdot g(z/p)\pmod p.	
	\end{align*}
\end{proposition}
\begin{proposition}[{{\cite[Proposition~2.3]{owom}}}]\label{p2.5}
	Suppose that
	\begin{align*}
		f(z)=	\sum_{n=0}^{\infty}a(n)q^n\in M_k(\Gamma_0(N), \chi).
	\end{align*}
	If $m\ge2$, then $f(z)\mid T_{m,k,\chi} \in M_k(\Gamma_0(N), \chi)$. 
\end{proposition}
In what follows are the generating functions for $a_s(n)$ modulo 3 and 5, which will be used in subsequent sections \ref{s3} and \ref{s4}.
\begin{lemma}[{{\cite{rg1}}}] For all $n\ge0$,
		\begin{align}\label{e3.1}
		\sum_{n=0}^\infty a_3(5n+1)q^n \equiv 3f_1f_2^2\pmod 5.
	\end{align} 
\end{lemma}
\begin{lemma}[{{Sellers \cite{sellers}}}]
For all $n\ge0$,
		\begin{align}
		\sum_{n=0}^{\infty}a_5(3n+1)q^n&\equiv2f_1f_2^4 \pmod 3\label{e2.1}\\
		\sum_{n=0}^{\infty}a_8(3n)q^n&\equiv\dfrac{f_1^8}{f_2^3}\pmod 3\label{e2.2}\\
		\sum_{n=0}^{\infty}a_8(9n+2)q^n&\equiv\dfrac{f_1^8}{f_2}\pmod 3\label{e2.3}\\
		\sum_{n=0}^{\infty}a_{11}(3n)q^n&\equiv\dfrac{f_1^7}{f_2^2}\pmod 3\label{e2.4}\\
		\sum_{n=0}^{\infty}a_{11}(3n+1)q^n&\equiv 2\dfrac{f_2^6}{f_1}\pmod 3\label{e2.5}\\
		\sum_{n=0}^{\infty}a_{14}(3n)q^n&\equiv \dfrac{f_1^6}{f_2}\pmod 3\label{e2.6}\\
		\sum_{n=0}^{\infty}a_{14}(3n+1)q^n&\equiv 2\dfrac{f_2^7}{f_1^2}\pmod 3\label{e2.7}\\
		\sum_{n=0}^{\infty}a_{17}(3n+1)q^n&\equiv 2\dfrac{f_2^8}{f_1^3}\pmod 3\label{e2.8}\\
		\sum_{n=0}^{\infty}a_{17}(9n+5)q^n&\equiv 2\dfrac{f_2^8}{f_1}\pmod 3\label{e2.9}\\
		\sum_{n=0}^{\infty}a_{20}(3n)q^n&\equiv f_1^4f_2\pmod 3 \label{e2.10}\\
		\sum_{n=0}^{\infty}a_{23}(3n+1)q^n&\equiv 2\dfrac{f_2^{10}}{f_1^5}\pmod 3\label{e2.11}\\
		\sum_{n=0}^{\infty}a_{26}(3n)q^n&\equiv f_1^2f_2^3\pmod 3 \label{e2.12}.
	\end{align}
	
\end{lemma}

	\section{Congruences modulo 5 for $a_3(n)$}\label{s3}
	With the above preliminaries in place, we are now ready to prove the following theorem. 
	\begin{theorem}\label{t1.3}
		Let $c(n)$ be defined by
		\begin{align}\label{e1.2}
			\sum_{n=0}^{\infty}c(n)q^n:= f_1 f_2^2,
		\end{align}
		and $p\ge 5$ be a prime. Define
		\begin{align}\label{e1.3}
			\xi(p):= c\displaystyle\left(\frac{5(p^2-1)}{24}\right) + \displaystyle\left(\frac{\frac{5}{12}(p^2-1)}{p}\right)_L.
		\end{align}
		(1). For $n,k\ge0$, if $p\nmid n$, then
			\begin{align}\label{e1.4}
			a_3\displaystyle\left(5p^{\omega(p)(k+1)-1}n + \dfrac{25p^{\omega(p)(k+1)}-1}{24}\right) \equiv0\pmod 5,
		\end{align}
where
		\begin{align}\label{e1.5}
			\omega(p) :=\begin{cases}
				\hphantom{1}4, & \text{if } \xi(p) \equiv 0\!\!\!\!\pmod 5,\\[4pt]
				\hphantom{1}6, & \text{if }\xi(p) \equiv \pm 1\!\!\!\!\pmod5 \;\text{and}\; p\equiv1\!\!\!\!\pmod5, \;\text{or} \\
				& \quad\xi(p) \equiv \pm2\!\!\!\!\pmod5 \;\text{and}\; p\equiv4\!\!\!\!\pmod5,\\[4pt]
				\hphantom{1}8, & \text{if } \xi(p) \equiv\pm2\!\!\!\!\pmod5 \;\text{and}\;p\equiv2\!\!\!\!\pmod5,\; \text{or}\\& \quad\xi(p) \equiv\pm1\!\!\!\!\pmod5\;\text{and}\; p\equiv3\!\!\!\!\pmod5,\\[4pt]
				10, & \text{if } \xi(p) \equiv\pm 2\!\!\!\!\pmod5\;\text{and}\; p\equiv1\!\!\!\!\pmod5,\;\text{or} \\& \quad\xi(p) \equiv\pm 1\!\!\!\!\pmod5\;\text{and}\; p\equiv4\!\!\!\!\pmod5,\\[4pt]
				12, & \text{if } \xi(p) \equiv\pm 1\!\!\!\!\pmod5\;\text{and}\; p\equiv2\!\!\!\!\pmod5,\; \text{or} \\& \quad\xi(p) \equiv\pm 2\!\!\!\!\pmod5\;\text{and}\; p\equiv3\!\!\!\!\pmod5.\\[4pt]
			\end{cases}
		\end{align}
		(2). If $\xi(p)\not\equiv0\pmod5$, then for $n,k\ge0$ with $\xi(p)\equiv \displaystyle\left(\frac{-2n+\frac{5}{12}(p^2-1)}{p}\right)_L\pmod 5$,
		\begin{align}\label{e1.6}
			a_3\displaystyle\left(5p^{\omega(p)k+2}n + \dfrac{25p^{\omega(p)k+2}-1}{24}\right) \equiv0\pmod 5,
		\end{align}
		where $\omega(p)$ is defined by	\eqref{e 1.5}.\\[4pt]
		(3). For $n,k\ge0$, if $p=5$, then
		\begin{align}\label{e1.7}
			a_3\displaystyle\left(25\cdot5^{2(k+1)}n+\frac{25\cdot5^{2(k+2)}-1)}{24}\right)\equiv (2)^{k+1}a_3(25n+26)\pmod5.
		\end{align}
	\end{theorem}
	\begin{proof}[Proof of (1) of Theorem \ref{t1.3}\unskip]
		Thanks to Lemma \ref{l2.2}, from \eqref{e1.2}, we have
		\begin{align}\label{e3.2}
			c\displaystyle\left(p^2n+\dfrac{5(p^2-1)}{24}\right)=\gamma(n)c(n)-p \;c\displaystyle\left(\frac{n-\frac{5(p^2-1)}{24}}{p^2}\right),
		\end{align}
		where
		\begin{align}\label{e3.3}
			\gamma(n)=p\alpha-\displaystyle\left(\frac{-2n+\frac{5(p^2-1)}{12}}{p}\right)_L.
		\end{align}
	 Setting $n=0$ in \eqref{e3.2}, we get 
		\begin{align}\label{e3.4}
			c\displaystyle\left(\frac{5(p^2-1)}{24}\right)=\gamma(0).
		\end{align}
		Now, using \eqref{e3.4} in \eqref{e3.3} with $n=0$, we get
		\begin{align}\label{e3.5}
			p\alpha=c\displaystyle\left(\frac{5(p^2-1)}{24}\right)+\displaystyle\left(\frac{\frac{5(p^2-1)}{12}}{p}\right)_L:=\xi(p).
		\end{align}
			We now rewrite \eqref{e3.2}, with the help of  \eqref{e3.3} and \eqref{e3.5} as follows
		\begin{align}\label{e3.6}
			c\displaystyle\left(p^2n+\dfrac{5(p^2-1)}{24}\right)=\displaystyle\left(\xi(p)-\displaystyle\left(\frac{-2n+\frac{5(p^2-1)}{12}}{p}\right)_L\right)c(n)-p\;c\displaystyle\left(\frac{n-\frac{5(p^2-1)}{24}}{p^2}\right).
		\end{align}
		Replacing $n$ by $pn+\frac{5(p^2-1)}{24}$ in \eqref{e3.6}, we obtain
		\begin{align}\label{e3.7}
			c\displaystyle\left(p^3n+\frac{5(p^4-1)}{24}\right)=\xi(p)c\displaystyle\left(pn+\frac{5(p^2-1)}{24}\right)-p\;c\displaystyle\left(\frac{n}{p}\right).
		\end{align}
From \eqref{e3.7}, if $\xi(p)\equiv0\pmod5$, we know
		\begin{align}\label{e3.8}
			c\displaystyle\left(p^3n+\frac{5(p^4-1)}{24}\right)\equiv -p\;c\displaystyle\left(\frac{n}{p}\right)\pmod5.
		\end{align}
		Replacing $n$ by $pn$ in \eqref{e3.8}, gives for every integer $k\ge0$,
		\begin{align}\label{e3.9}
			c\displaystyle\left(p^4n+\frac{5(p^4-1)}{24}\right)\equiv-p\;c(n)\pmod5.	
		\end{align}
		By induction on \eqref{e3.9}, we see for every $k\ge0$,
		\begin{align}\label{e3.10}
			c\displaystyle\left(p^{4k}n+\frac{5(p^{4k}-1)}{24}\right)\equiv(-p)^kc(n)\pmod5.	
		\end{align}
		Also, for $p\nmid n$, \eqref{e3.8} yields
		\begin{align}\label{e3.11}
			c\displaystyle\left(p^3n+\frac{5(p^4-1)}{24}\right)\equiv0\pmod5.
		\end{align}
		Again, employing \eqref{e3.11} in \eqref{e3.10} with $n$ replaced by $p^3n+\frac{5(p^4-1)}{24}$, we see that 
		\begin{align}\label{e3.12}
			c\displaystyle\left(p^{4k+3}n+\frac{5(p^{4k+4}-1)}{24}\right)\equiv0\pmod5.	
		\end{align}
		Replacing $n$ by $p^2n+\frac{5p(p^2-1)}{24}$ in \eqref{e3.7}, yields
		\begin{align}\label{e3.13}
			c\displaystyle\left(p^5n+\frac{5(p^6-1)}{24}\right)=\xi(p)c\displaystyle\left(p^3n+\frac{5(p^4-1)}{24}\right)-p\;c\displaystyle\left(pn+\frac{5(p^2-1)}{24}\right).
		\end{align}
		Now, \eqref{e3.7} and \eqref{e3.13}, allows us to see that
		\begin{align}\label{e3.14}
			c\displaystyle\left(p^5n+\frac{5(p^6-1)}{24}\right)=(\xi^2(p)-p)c\displaystyle\left(pn+\frac{5(p^2-1)}{24}\right)-\xi(p)p\;c\displaystyle\left(\frac{n}{p}\right).
		\end{align}
		We observe that for $p\equiv1\pmod5$ and $\xi(p) \equiv \pm1\pmod5$, or $p\equiv4\pmod5$  and $\xi(p) \equiv \pm2\pmod5$, \eqref{e3.14} gives
		\begin{align}\label{e3.15}
			c\displaystyle\left(p^5n+\frac{5(p^6-1)}{24}\right)\equiv-\xi(p)p\;c\displaystyle\left(\frac{n}{p}\right) \pmod 5.
		\end{align}
		If $p\nmid n$, we have
		\begin{align}\label{e3.16}
			c\displaystyle\left(p^5n+\frac{5(p^6-1)}{24}\right)\equiv0\pmod 5.
		\end{align}
		Replacing $n$ by $pn$ in \eqref{e3.15}, we get
		\begin{align}\label{e3.17}
			c\displaystyle\left(p^6n+\frac{5(p^6-1)}{24}\right)\equiv-\xi(p)p\;c(n) \pmod 5.
		\end{align}
		By induction on \eqref{e3.17}, we obtain for every integer $k\ge0$,
		\begin{align}\label{e3.18}
			c\displaystyle\left(p^{6k}n+\frac{5(p^{6k}-1)}{24}\right)\equiv(-\xi(p)p)^kc(n) \pmod 5.
		\end{align}
		Using \eqref{e3.16} in \eqref{e3.18} with $n$ replaced $p^5n+\frac{5(p^6-1)}{24}$, yields
		\begin{align}\label{e3.19}
			c\displaystyle\left(p^{6k+5}n+\frac{5(p^{6k+6}-1)}{24}\right)\equiv0\pmod 5.
		\end{align}
			Again, replacing $n$ by $p^2n+\frac{5p(p^2-1)}{24}$ in \eqref{e3.13}, we obtain
		\begin{align}\label{e3.20}
			c\displaystyle\left(p^7n+\frac{5(p^8-1)}{24}\right)=\xi(p)c\displaystyle\left(p^5n+\frac{5(p^6-1)}{24}\right)-p\;c\displaystyle\left(p^3n+\frac{5(p^4-1)}{24}\right).
		\end{align}
	Now, we rewrite \eqref{e3.20} using \eqref{e3.7} and \eqref{e3.14} as follows
		\begin{align}\label{e3.21}
			c\displaystyle\left(p^7n+\frac{5(p^8-1)}{24}\right)=\xi(p)(\xi^2(p)-2p)c\displaystyle\left(pn+\frac{5(p^2-1)}{24}\right)+(p^2-p\;\xi^2(p))\;c\displaystyle\left(\frac{n}{p}\right).
		\end{align}
	We note, for $p\equiv2\pmod5$ and  $\xi(p) \equiv \pm2\pmod5$, or $p\equiv3\pmod5$ and $\xi(p) \equiv \pm1\pmod5$, \eqref{e3.21} becomes
		\begin{align}\label{e3.22}
			c\displaystyle\left(p^7n+\frac{5(p^8-1)}{24}\right)\equiv c\displaystyle\left(\frac{n}{p}\right)\pmod5.
		\end{align}
	For $p\nmid n$, it is easy to agree that
		\begin{align}\label{e3.23}
			c\displaystyle\left(p^7n+\frac{5(p^8-1)}{24}\right)\equiv0\pmod5.
		\end{align}
		With $n$ replaced by $pn$ in \eqref{e3.22}, we have
		\begin{align}\label{e3.24}
			c\displaystyle\left(p^8n+\frac{5(p^8-1)}{24}\right)\equiv c(n)\pmod5.
		\end{align}
	Thus, induction on \eqref{e3.24} yields for every integer $k\ge0$,
		\begin{align}\label{e3.25}
			c\displaystyle\left(p^{8k}n+\frac{5(p^{8k}-1)}{24}\right)\equiv c(n)\pmod5.
		\end{align}
Replacing $n$ by $p^7n+\frac{5(p^8-1)}{24}$ in \eqref{e3.25} and using \eqref{e3.23}, we obtain
		\begin{align}\label{e3.26}
			c\displaystyle\left(p^{8k+7}n+\frac{5(p^{8k+8}-1)}{24}\right)\equiv0\pmod5.
		\end{align}
		Replacing $n$ by $p^2n+\frac{5p(p^2-1)}{24}$ in \eqref{e3.20}, we get
		\begin{align}\label{e3.27}
			c\displaystyle\left(p^9n+\frac{5(p^{10}-1)}{24}\right)=\xi(p)c\displaystyle\left(p^7n+\frac{5(p^8-1)}{24}\right)-p\;c\displaystyle\left(p^5n+\frac{5(p^6-1)}{24}\right).
		\end{align}
		Now rewriting \eqref{e3.27}, by referring \eqref{e3.14} and \eqref{e3.21}, we obtain
		\begin{align}\label{e3.28}
			c\displaystyle\left(p^9n+\frac{5(p^{10}-1)}{24}\right)&=(\xi^4(p)-3p\xi^2(p)+p^2)\;c\displaystyle\left(pn+\frac{5(p^2-1)}{24}\right) \nonumber\\& \quad+(2p^2\xi(p)-p\xi^3(p))\;c\displaystyle\left(\frac{n}{p}\right).
		\end{align}
		Note that, if $p\equiv1\pmod5$ and $\xi(p) \equiv \pm2\pmod5$, or $p\equiv4\pmod5$ and $\xi(p) \equiv \pm1\pmod5$, then \eqref{e3.28} becomes
		\begin{align}\label{e3.29}
			c\displaystyle\left(p^9n+\frac{5(p^{10}-1)}{24}\right)\equiv s\; c\displaystyle\left(\frac{n}{p}\right)\pmod5,
		\end{align}
		where $s\not\equiv0\pmod 5$.\\
		If $p\nmid n$, then 
		\begin{align}\label{e3.30}
			c\displaystyle\left(p^9n+\frac{5(p^{10}-1)}{24}\right)\equiv0\pmod5.
		\end{align}
		Further, replacing $n$ by $pn$ in \eqref{e3.29}, we obtain
		\begin{align}\label{e3.31}
			c\displaystyle\left(p^{10}n+\frac{5(p^{10}-1)}{24}\right)\equiv s\; c(n)\pmod5.
		\end{align}
		Induction on \eqref{e3.31} gives for every integer $k\ge 0$, 
		\begin{align}\label{e3.32}
			c\displaystyle\left(p^{10k}n+\frac{5(p^{10k}-1)}{24}\right)\equiv s^k c(n)\pmod5.
		\end{align}
		Again, replacing $n$ by $p^9n+\frac{5(p^{10}-1)}{24}$ in \eqref{e3.32} and using \eqref{e3.30}, we obtain 
		\begin{align}\label{e3.33}
			c\displaystyle\left(p^{10k+9}n+\frac{5(p^{10k+10}-1)}{24}\right)\equiv0\pmod5.
		\end{align}
		Now, replacing $n$ by $p^2n+\frac{5p(p^2-1)}{24}$ in \eqref{e3.27}, we obtain
		\begin{align}\label{e3.34}
			c\displaystyle\left(p^{11}n+\frac{5(p^{12}-1)}{24}\right)=\xi(p)c\displaystyle\left(p^9n+\frac{5(p^
				{10}-1)}{24}\right)-p\;c\displaystyle\left(p^7n+\frac{5(p^8-1)}{24}\right).
		\end{align}
	Substituting \eqref{e3.21} and \eqref{e3.28} in RHS of \eqref{e3.34}, we get
		\begin{align}\label{e3.35}
			c\displaystyle\left(p^
			{11}n+\frac{5(p^{12}-1)}{24}\right)&=(\xi^5(p)-4p\xi^3(p)+3p^2\xi(p))\;c\displaystyle\left((pn+\frac{5(p^2-1)}{24}\right) \nonumber \\ & \quad+(3p^2\xi^2(p)-p\xi^4(p)-p^3)\;c\displaystyle\left(\frac{n}{p}\right).
		\end{align}
		In particular, for $p\equiv2\pmod5$ and $\xi(p) \equiv \pm1\pmod5$, or $p\equiv3\pmod5$ and $\xi(p) \equiv\pm2\pmod5$, we have from \eqref{e3.35} that
		\begin{align}\label{e3.36}
			c\displaystyle\left(p^
			{11}n+\frac{5(p^{12}-1)}{24}\right)\equiv r\; c\displaystyle\left(\frac{n}{p}\right)\pmod5,
		\end{align}
		where $r\equiv\pm2\pmod 5$.\\ Observe that, if $p\nmid n$, then
		\begin{align}\label{e3.37}
			c\displaystyle\left(p^{11}n+\frac{5(p^{12}-1)}{24}\right)\equiv0\pmod5.
		\end{align}
		Replacing $n$ by $pn$ in \eqref{e3.36}, we obtain
		\begin{align}\label{e3.38}
			c\displaystyle\left(p^{12}n+\frac{5(p^{12}-1)}{24}\right)\equiv r\; c(n)\pmod5.
		\end{align}
		By induction on \eqref{e3.38}, we see that for every integer $k\ge 0$,
		\begin{align}\label{e3.39}
			c\displaystyle\left(p^{12k}n+\frac{5(p^{12k}-1)}{24}\right)\equiv r^k c(n)\pmod5.
		\end{align}
		Now, replacing $n$ by $p^{11}n+\frac{5(p^{12}-1)}{24}$ in \eqref{e3.39} and using \eqref{e3.37}, we obtain 
		\begin{align}\label{e3.40}
			c\displaystyle\left(p^{12k+11}n+\frac{5(p^{12k+12}-1)}{24}\right)\equiv0\pmod5.
		\end{align}
		From \eqref{e1.2} and generating function \eqref{e3.1}, we have
		\begin{align}\label{e3.41}
			a_3(5n+1) \equiv 3c(n)\pmod 5.
		\end{align}
		Using \eqref{e3.12}, \eqref{e3.19}, \eqref{e3.26}, \eqref{e3.33}, \eqref{e3.40}, and \eqref{e3.41}, we settle the proof of congruence \eqref{e1.4}.
	\end{proof}
		\begin{proof}[Proof of (2) of Theorem \ref{t1.3}\unskip]
 We observe from, \eqref{e3.6}, if $\xi(p)\equiv\displaystyle\left(\frac{-2n+\frac{5(p^2-1)}{12}}{p}\right)_L\pmod 5$, then
		\begin{align}\label{e3.42}
			c\displaystyle\left(p^2n+\dfrac{5(p^2-1)}{24}\right)\equiv-p\;c\displaystyle\left(\frac{n-\frac{5(p^2-1)}{24}}{p^2}\right)\pmod 5.
		\end{align}
		We also observe that if $\displaystyle\left(\frac{-2n+\frac{5(p^2-1)}{12}}{p}\right)_L\not\equiv0\pmod 5$, then $p\nmid (24n+5)$. 	It is easy to verify $\frac{n-\frac{5(p^2-1)}{24}}{p^2}$ is not an integer, thus, we have
		\begin{align}\label{e3.43}
			c\displaystyle\left(\frac{n-\frac{5(p^2-1)}{24}}{p^2}\right)=0.
		\end{align}
\
		From \eqref{e3.42} and \eqref{e3.43}, we get
		\begin{align}\label{e3.44}
			c\displaystyle\left(p^2n+\dfrac{5(p^2-1)}{24}\right)\equiv0\pmod5.
		\end{align}
		Using \eqref{e3.44} in \eqref{e3.18}, \eqref{e3.25}, \eqref{e3.32}, \eqref{e3.39} with $n$ replaced by $p^2n+\frac{5(p^2-1)}{24}$, we settle the proof of congruence \eqref{e1.6}.
	\end{proof}
		\begin{proof}[Proof of (3) of Theorem \ref{t1.3}\unskip]
		If $p=5$, (it is clear that $\xi(p)=2$) then \eqref{e3.7} implies
		\begin{align}\label{e3.49}
			c\displaystyle\left(p^3n+\frac{5(p^4-1)}{24}\right)\equiv 2 c\displaystyle\left(pn+\frac{5(p^2-1)}{24}\right)\pmod5.
		\end{align}
		Replacing $n$ by $p^2n+\frac{5p(p^2-1)}{24}$ in \eqref{e3.49} and using induction, we obtain for $k\ge0$,
		\begin{align}\label{e3.50}
			c\displaystyle\left(p^{2(k+1)+1}n+\frac{5(p^{2(k+2)}-1)}{24}\right)\equiv (2)^{k+1}c\displaystyle\left(pn+\frac{5(p^2-1)}{24}\right)\pmod5.
		\end{align}
	Using \eqref{e3.50} in \eqref{e3.41}, we complete the proof of Theorem \ref{t1.3}.
	\end{proof} 	
	\begin{remark}
		For example, setting $p=11$ and $k=0$ in Theorem \eqref{t1.3}, we find that $\xi(11)\equiv\displaystyle\left(\frac{50-2n}{p}\right)_L\equiv1\pmod 5$ when $n\equiv1, 4, 6, 7, 8\pmod{11}$, and for $n\ge0$,
		\begin{align*}
			a_3(6655n+605j+126)\equiv0\pmod5,	
		\end{align*}
		where $j\in\{1, 4, 6, 7, 8\}$.
	\end{remark}
	\section{Congruences modulo 3 for $a_t(n)$, where $t\in\{5,8,11,14,17,20,23,26\}$} \label{s4}
In this section, all the congruences hold to the modulo 3. We define $\omega(p)$ purely for notational convenience:
	\begin{align}\label{e4.1}
		\omega(p) :=\begin{cases}
			\hphantom{1}4, & \text{if } \xi_i(p) \equiv 0\!\!\!\!\pmod 3\\[4pt]
			\hphantom{1}6, & \text{if }\xi_i(p) \not\equiv0\!\!\!\!\pmod3 \;\text{and}\; p\equiv1\!\!\!\!\pmod3\\[4pt]
			\hphantom{1}8, & \text{if } \xi_i(p) \not\equiv0\!\!\!\!\pmod3 \;\text{and}\;p\equiv2\!\!\!\!\pmod3\;,\\[4pt]
		\end{cases}
	\end{align}
where $i\in\{1,2,3,4,5,6,7,8,9,10,11,12\}$.
	\begin{theorem}\label{t4.1}
For $n$ be a positive integer, $p\ge5$ be a prime, and $\omega(p)$ be defined as in \eqref{e4.1}, we define
	\begin{align}\label{e4.2}
	\xi_1(p):= c_1\displaystyle\left(\frac{3(p^2-1)}{8}\right) +p \displaystyle\left(\frac{-\frac{3}{4}(p^2-1)}{p}\right)_L,
\end{align}
where 
		\begin{align}\label{e4.3}
			\sum_{n=0}^{\infty}c_1(n)q^n:=f_1f_2^4.
		\end{align}
		The following hold:\\
(1). For $k\ge0$, if $p\nmid n$, then
\begin{align}\label{e4.4}
			a_5\displaystyle\left(3p^{\omega(p)(k+1)-1}n + \dfrac{9p^{\omega(p)(k+1)}-1}{8}\right) \equiv0.
		\end{align}
		(2). If $\xi_1(p)\not\equiv0$, then for $k\ge0$ with $\xi_1(p)\equiv p\displaystyle\left(\frac{2n-\frac{3}{4}(p^2-1)}{p}\right)_L$,
		\begin{align}\label{e4.5}
			a_5\displaystyle\left(3p^{\omega(p)k+2}n + \dfrac{9p^{\omega(p)k+2}-1}{8}\right) \equiv0.
		\end{align}
	\end{theorem}
	\begin{proof}[Proof of (1) of Theorem \ref{t4.1}\unskip]
	Thanks to Lemma \ref{l2.2}, from \eqref{e4.3} we have
		\begin{align}\label{e4.6}
			c_1\displaystyle\left(p^2n+\dfrac{3(p^2-1)}{8}\right)=\displaystyle \left(p^3\alpha-p\displaystyle\left(\frac{2n-\frac{3(p^2-1)}{4}}{p}\right)_L\right)c_1(n)-p^3 \;c_1\displaystyle\left(\frac{n-\frac{3(p^2-1)}{8}}{p^2}\right).
		\end{align}
 Setting $n=0$ in \eqref{e4.6}, we get
		\begin{align}\label{e4.7}
			p^3\alpha=c_1\displaystyle\left(\frac{3(p^2-1)}{8}\right)+p\displaystyle\left(\frac{\frac{-3(p^2-1)}{4}}{p}\right)_L:=\xi_1(p).
		\end{align}
		Using \eqref{e4.7}, we rewrite \eqref{e4.6} as
		\begin{align}\label{e4.8}
			c_1\displaystyle\left(p^2n+\dfrac{3(p^2-1)}{8}\right)=\displaystyle\left(\xi_1(p)-p\displaystyle\left(\frac{2n-\frac{3(p^2-1)}{4}}{p}\right)_L\right)c_1(n)-p^3\;c_1\displaystyle\left(\frac{n-\frac{3(p^2-1)}{8}}{p^2}\right).
		\end{align}
		Replacing $n$ by $pn+\frac{3(p^2-1)}{8}$ in \eqref{e4.8}, we obtain
		\begin{align}\label{e4.9}
			c_1\displaystyle\left(p^3n+\frac{3(p^4-1)}{8}\right)=\xi_1(p)c_1\displaystyle\left(pn+\frac{3(p^2-1)}{8}\right)-p^3\;c_1\displaystyle\left(\frac{n}{p}\right).
		\end{align}
	For, if $\xi_1(p)\equiv0$ in \eqref{e4.9}, we have
		\begin{align}\label{e4.10}
			c_1\displaystyle\left(p^3n+\frac{3(p^4-1)}{8}\right)\equiv -p^3\;c_1\displaystyle\left(\frac{n}{p}\right).
		\end{align}
		Now, replacing $n$ by $pn$ in \eqref{e4.10}, we have
		\begin{align}\label{e4.11}
			c_1\displaystyle\left(p^4n+\frac{3(p^4-1)}{8}\right)\equiv-p^3\;c_1(n).	
		\end{align}
	For $k\ge0$, induction on \eqref{e4.11} gives
		\begin{align}\label{e4.12}
			c_1\displaystyle\left(p^{4k}n+\frac{3(p^{4k}-1)}{8}\right)\equiv(-p^3)^kc_1(n).	
		\end{align}
		Also, if $\xi_1(p)\equiv0$ and $p\nmid n$, then by \eqref{e4.10} we get
		\begin{align}\label{e4.13}
			c_1\displaystyle\left(p^3n+\frac{3(p^4-1)}{8}\right)\equiv0.
		\end{align}
		Further, in \eqref{e4.12} with $n$ replaced by $p^3n+\frac{3(p^4-1)}{8}$, we employ \eqref{e4.13} to obtain
		\begin{align}\label{e4.14}
			c_1\displaystyle\left(p^{4k+3}n+\frac{3(p^{4k+4}-1)}{8}\right)\equiv0.	
		\end{align}
	Again, in \eqref{e4.9} with $n$ replaced by $p^2n+\frac{3p(p^2-1)}{8}$ yields us
		\begin{align}\label{e4.15}
			c_1\displaystyle\left(p^5n+\frac{3(p^6-1)}{8}\right)=\xi_1(p)c_1\displaystyle\left(p^3n+\frac{3(p^4-1)}{8}\right)-p^3\;c_1\displaystyle\left(pn+\frac{3(p^2-1)}{8}\right).
		\end{align}
Using \eqref{e4.9} and \eqref{e4.15}, we have
		\begin{align}\label{e4.16}
			c_1\displaystyle\left(p^5n+\frac{3(p^6-1)}{8}\right)=(\xi_1^2(p)-p^3)c_1\displaystyle\left(pn+\frac{3(p^2-1)}{8}\right)-p^3\xi_1(p)\;c_1\displaystyle\left(\frac{n}{p}\right).
		\end{align}
If $\xi_1(p)\not \equiv 0$ and $p\equiv1$, then \eqref{e4.16} becomes
		\begin{align}\label{e4.17}
			c_1\displaystyle\left(p^5n+\frac{3(p^6-1)}{8}\right)\equiv-p^3\xi_1(p)\;c_1\displaystyle\left(\frac{n}{p}\right).
		\end{align}
		If $p\nmid n$ then $c_1\displaystyle\left(\frac{n}{p}\right)=0$. Thus, \eqref{e4.17} can be rewritten as
		\begin{align}\label{e4.18}
			c_1\displaystyle\left(p^5n+\frac{3(p^6-1)}{8}\right)\equiv0.
		\end{align}
	Using induction on \eqref{e4.17}, with $n$ replaced by $pn$, we immediately get
		\begin{align}\label{e4.19}
			c_1\displaystyle\left(p^{6k}n+\frac{3(p^{6k}-1)}{8}\right)\equiv(-p^3\xi_1(p))^kc_1(n).
		\end{align}
	Again, in \eqref{e4.19} with $n$ replaced by $p^3n+\frac{3(p^4-1)}{8}$, we employ \eqref{e4.18} to obtain 
		\begin{align}\label{e4.20}
			c_1\displaystyle\left(p^{6k+5}n+\frac{3(p^{6k+6}-1)}{8}\right)\equiv0.
		\end{align}
		Replacing $n$ by $p^2n+\frac{3p(p^2-1)}{8}$ in \eqref{e4.15}, gives
		\begin{align}\label{e4.21}
			c_1\displaystyle\left(p^7n+\frac{3(p^8-1)}{8}\right)=\xi_1(p)c_1\displaystyle\left(p^5n+\frac{3(p^6-1)}{8}\right)-p^3\;c_1\displaystyle\left(p^3n+\frac{3(p^4-1)}{8}\right).
		\end{align}
		In \eqref{e4.21}, substituting for $c_1\displaystyle\left(p^3n+\frac{3(p^4-1)}{8}\right)$ and $c_1\displaystyle\left(p^5n+\frac{3(p^6-1)}{8}\right)$ from \eqref{e4.9} and \eqref{e4.16} respectively, we have
		\begin{align}\label{e4.22}
			c_1\displaystyle\left(p^7n+\frac{3(p^8-1)}{8}\right)=(\xi_1^3(p)-2\xi_1(p)p^3)c_1\displaystyle\left(pn+\frac{3(p^2-1)}{8}\right)+(p^6-p^3\;\xi_1^2(p))\;c_1\displaystyle\left(\frac{n}{p}\right).
		\end{align}
		If $\xi_1(p)\not \equiv 0$ and $p\equiv2$, then \eqref{e4.22} becomes
		\begin{align}\label{e4.23}
			c_1\displaystyle\left(p^7n+\frac{3(p^8-1)}{8}\right)\equiv 2c_1\displaystyle\left(\frac{n}{p}\right).
		\end{align}
	Now, if $p\nmid n$, we have
		\begin{align}\label{e4.24}
			c_1\displaystyle\left(p^7n+\frac{3(p^8-1)}{8}\right)\equiv0.
		\end{align}
	Using induction on \eqref{e4.24}, with $n$ replaced by $pn$, we immediately get
		\begin{align}\label{e4.25}
			c_1\displaystyle\left(p^{8k}n+\frac{3(p^{8k}-1)}{8}\right)\equiv (-1)^kc_1(n).
		\end{align}
		Replacing $n$ by $p^7n+\frac{3(p^8-1)}{8}$ in \eqref{e4.25} and using \eqref{e4.24}, we obtain
		\begin{align}\label{e4.26}
			c_1\displaystyle\left(p^{8k+7}n+\frac{3(p^{8k+8}-1)}{8}\right)\equiv0.
		\end{align}
Again, thanks to the generating function of Sellers \eqref{e2.1} and definition of $c_1(n)$, we have
		\begin{align}\label{e4.27}
			a_5(3n+1) \equiv 2c_1(n).
		\end{align}
		Using \eqref{e4.14}, \eqref{e4.20}, \eqref{e4.26}, and \eqref{e4.27}, we settle the proof of \eqref{e4.4}.
\end{proof}	
\begin{proof}[Proof of (2) of Theorem \ref{t4.1}\unskip]
We observe \eqref{e4.8} gives 
		\begin{align}\label{e4.28}
			c_1\displaystyle\left(p^2n+\dfrac{3(p^2-1)}{8}\right)\equiv-p^3\;c_1\displaystyle\left(\frac{n-\frac{3(p^2-1)}{8}}{p^2}\right),
		\end{align}
		when $\xi_1(p)\equiv p\displaystyle\left(\frac{2n-\frac{3(p^2-1)}{4}}{p}\right)_L$.\\
		Again, if $\displaystyle\left(\frac{2n-\frac{3(p^2-1)}{4}}{p}\right)_L\not\equiv0$, then $p\nmid (8n+3)$, which implies $\frac{n-\frac{3(p^2-1)}{8}}{p^2}$ is non-integer. Thus, we have
		\begin{align}\label{e4.29}
			c_1\displaystyle\left(\frac{n-\frac{3(p^2-1)}{8}}{p^2}\right)=0.
		\end{align}
		From \eqref{e4.28} and \eqref{e4.29}, we get
		\begin{align}\label{e4.30}
			c_1\displaystyle\left(p^2n+\dfrac{3(p^2-1)}{8}\right)\equiv0.
		\end{align}
		Replacing $n$ by $p^2n+\frac{3(p^2-1)}{8}$ in \eqref{e4.19} and using \eqref{e4.30}, we get
		\begin{align}\label{e4.31}
			c_1\displaystyle\left(p^{6k+2}n+\frac{3(p^{6k+2}-1)}{8}\right)\equiv0 .
		\end{align}
		Further, in \eqref{e4.25} with $n$ replaced by $p^2n+\frac{3(p^2-1)}{8}$, we employ \eqref{e4.30} to obtain
		\begin{align}\label{e4.32}
			c_1\displaystyle\left(p^{8k+2}n+\frac{3(p^{8k+2}-1)}{8}\right)\equiv 0.
		\end{align}
This settles the proof of \eqref{e4.5}.
	\end{proof} 	
	In the following theorem, we state a new family of congruences for $a_8(n)$.
	\begin{theorem}\label{t3.2}
		For $n$ be a positive integer, $p\ge5$ be a prime, and $\omega(p)$ be defined as in \eqref{e4.1}, we define
		\begin{align*}
		\xi_2(p):= c_2\displaystyle\left(\frac{p^2-1}{12}\right) +p\displaystyle\left(\frac{2^{-2}}{p}\right)_L \displaystyle\left(\frac{\frac{-(p^2-1)}{12}}{p}\right)_L,
	\end{align*}
		where 
			\begin{align*}
			\sum_{n=0}^{\infty}c_2(n)q^n:=\dfrac{f_1^8}{f_2^3}.
		\end{align*}
		The following hold:\\
		1. For $k\ge0$, if $p\nmid n$, then
		\begin{align*}
			a_8\displaystyle\left(3p^{\omega(p)(k+1)-1}n + \dfrac{p^{\omega(p)(k+1)}-1}{4}\right) \equiv0.
		\end{align*}
		2. If $\xi_2(p)\not\equiv0$, then for $k\ge0$ with $\xi_2(p)\equiv p\displaystyle\left(\frac{2^{-2}}{p}\right)_L\displaystyle\left(\frac{n-\frac{p^2-1}{12}}{p}\right)_L$,
		\begin{align*}
			a_8\displaystyle\left(3p^{\omega(p)k+2}n + \dfrac{p^{\omega(p)k+2}-1}{4}\right) \equiv0.
		\end{align*}
	\end{theorem}
		In the next theorem, we state a new family of congruences for $a_8(n)$.
		\begin{theorem}
			For $n$ be a positive integer, $p\ge5$ be a prime, and $\omega(p)$ be defined as in \eqref{e4.1}, we define
			\begin{align*}
			\xi_3(p):= c_3\displaystyle\left(\frac{p^2-1}{4}\right) +p^2 \displaystyle\left(\frac{\frac{(p^2-1)}{4}}{p}\right)_L,
		\end{align*}
		where 
			\begin{align*}
			\sum_{n=0}^{\infty}c_3(n)q^n:=\dfrac{f_1^8}{f_2}.
		\end{align*}
		The following hold:\\
		1. For $k\ge0$, if $p\nmid n$, then
	\begin{align*}
		a_8\displaystyle\left(9p^{\omega(p)(k+1)-1}n + \dfrac{9p^{\omega(p)(k+1)}-1}{4}\right) \equiv0.
	\end{align*}
		2. If $\xi_3(p)\not\equiv0$, then for $k\ge0$ with $\xi_3(p)\equiv p^2\displaystyle\left(\frac{-n+\frac{(p^2-1)}{4}}{p}\right)_L$,
			\begin{align*}
			a_8\displaystyle\left(9p^{\omega(p)k+2}n + \dfrac{9p^{\omega(p)k+2}-1}{4}\right) \equiv0.
		\end{align*}
	\end{theorem}
		In the following theorem, we establish a new family of congruences for $a_{11}(n)$.
		\begin{theorem}
				For $n$ be a positive integer, $p\ge5$ be a prime, and $\omega(p)$ be defined as in \eqref{e4.1}, we define
				\begin{align*}
				\xi_4(p):= c_4\displaystyle\left(\frac{p^2-1}{8}\right) +p\displaystyle\left(\frac{2^{-1}}{p}\right)_L \displaystyle\left(\frac{-\frac{(p^2-1)}{8}}{p}\right)_L,
			\end{align*}
			where 
				\begin{align*}
				\sum_{n=0}^{\infty}c_4(n)q^n:=\dfrac{f_1^7}{f_2^2}.
			\end{align*}
			The following hold:\\
			1. For $k\ge0$, if $p\nmid n$, then
			\begin{align*}
			a_{11}\displaystyle\left(3p^{\omega(p)(k+1)-1}n + \dfrac{3p^{\omega(p)(k+1)}-3}{8}\right) \equiv0.
		\end{align*}
			2. If $\xi_4(p)\not\equiv0$, then for $k\ge0$ with $\xi_4(p)\equiv p\displaystyle\left(\frac{2^{-1}}{p}\right)_L\displaystyle\left(\frac{n-\frac{(p^2-1)}{8}}{p}\right)_L$,
				\begin{align*}
				a_{11}\displaystyle\left(3p^{\omega(p)k+2}n + \dfrac{3p^{\omega(p)k+2}-3}{8}\right) \equiv0.
			\end{align*}
	\end{theorem}
	In the theorem below, we derive a new family of congruences for $a_{11}(n)$.
		\begin{theorem}
				For $n$ be a positive integer, $p\ge5$ be a prime, and $\omega(p)$ be defined as in \eqref{e4.1}, we define
			\begin{align*}
			\xi_5(p):= c_5\displaystyle\left(\frac{11(p^2-1)}{24}\right) +p \displaystyle\left(\frac{-\frac{11}{12}(p^2-1)}{p}\right)_L,
		\end{align*}
			where 
			\begin{align*}
			\sum_{n=0}^{\infty}c_5(n)q^n:=\dfrac{f_2^6}{f_1}.
		\end{align*}
			The following hold:\\
			1. For $k\ge0$, if $p\nmid n$, then
			\begin{align*}
			a_{11}\displaystyle\left(3p^{\omega(p)(k+1)-1}n + \dfrac{11p^{\omega(p)(k+1)}-3}{8}\right) \equiv0.
		\end{align*}
			2. If $\xi_5(p)\not\equiv0$, then for $k\ge0$ with $\xi_5(p)\equiv p\displaystyle\left(\frac{2n-\frac{11(p^2-1)}{12}}{p}\right)_L$,
			\begin{align*}
			a_{11}\displaystyle\left(3p^{\omega(p)k+2}n + \dfrac{11p^{\omega(p)k+2}-3}{8}\right) \equiv0.
		\end{align*}
	\end{theorem}
	In the next theorem, we present a new family of congruences for $a_{14}(n)$.
		\begin{theorem}
				For $n$ be a positive integer, $p\ge5$ be a prime, and $\omega(p)$ be defined as in \eqref{e4.1}, we define
	\begin{align*}
	\xi_6(p):= c_6\displaystyle\left(\frac{p^2-1}{6}\right) +p \displaystyle\left(\frac{\frac{-(p^2-1)}{6}}{p}\right)_L,
\end{align*}
			where 
			\begin{align*}
				\sum_{n=0}^{\infty}c_6(n)q^n:=\dfrac{f_1^6}{f_2}.
			\end{align*}
			The following hold:\\
			1. For $k\ge0$, if $p\nmid n$, then
		\begin{align*}
			a_{14}\displaystyle\left(3p^{\omega(p)(k+1)-1}n + \dfrac{p^{\omega(p)(k+1)}-1}{2}\right) \equiv0.
		\end{align*}
			2. If $\xi_6(p)\not\equiv0$, then for $k\ge0$ with $\xi_6(p)\equiv p\displaystyle\left(\frac{n-\frac{(p^2-1)}{6}}{p}\right)_L$,
			\begin{align*}
			a_{14}\displaystyle\left(3p^{\omega(p)k+2}n + \dfrac{p^{\omega(p)k+2}-1}{2}\right) \equiv0.
		\end{align*}
	\end{theorem}
	
	The theorem below presents a new family of congruences satisfied by $a_{14}(n)$.
	\begin{theorem}
			For $n$ be a positive integer, $p\ge5$ be a prime, and $\omega(p)$ be defined as in \eqref{e4.1}, we define
		\begin{align*}
		\xi_7(p):= c_7\displaystyle\left(\frac{p^2-1}{2}\right) +p \displaystyle\left(\frac{\frac{-(p^2-1)}{2}}{p}\right)_L,
	\end{align*}
		where 
			\begin{align*}
			\sum_{n=0}^{\infty}c_7(n)q^n:=\dfrac{f_2^7}{f_1^2}.
		\end{align*}
		The following hold:\\
		1. For $k\ge0$, if $p\nmid n$, then
		\begin{align*}
		a_{14}\displaystyle\left(3p^{\omega(p)(k+1)-1}n + \dfrac{3p^{\omega(p)(k+1)}-1}{2}\right) \equiv0.
	\end{align*}
		2. If $\xi_7(p)\not\equiv0$, then for $k\ge0$ with $\xi_7(p)\equiv p\displaystyle\left(\frac{n-\frac{(p^2-1)}{2}}{p}\right)_L$,
	\begin{align*}
		a_{14}\displaystyle\left(3p^{\omega(p)k+2}n + \dfrac{3p^{\omega(p)k+2}-1}{2}\right) \equiv0.
	\end{align*}
	\end{theorem}
		In the following theorem, we state a new family of congruences for $a_{17}(n)$.
	\begin{theorem}
			For $n$ be a positive integer, $p\ge5$ be a prime, and $\omega(p)$ be defined as in \eqref{e4.1}, we define
		\begin{align*}
		\xi_8(p):= c_8\displaystyle\left(\frac{13(p^2-1)}{24}\right) +p \displaystyle\left(\frac{-\frac{13}{12}(p^2-1)}{p}\right)_L,
	\end{align*}
		where 
			\begin{align*}
			\sum_{n=0}^{\infty}c_8(n)q^n:=\dfrac{f_2^8}{f_1^3}.
		\end{align*}
		The following hold:\\
		1. For $k\ge0$, if $p\nmid n$, then
			\begin{align*}
			a_{17}\displaystyle\left(3p^{\omega(p)(k+1)-1}n + \dfrac{13p^{\omega(p)(k+1)}-5}{8}\right) \equiv0.
		\end{align*}
		2. If $\xi_8(p)\not\equiv0$, then for $k\ge0$ with $\xi_8(p)\equiv p\displaystyle\left(\frac{2n-\frac{13}{12}(p^2-1)}{p}\right)_L$,
	\begin{align*}
		a_{17}\displaystyle\left(3p^{\omega(p)k+2}n + \dfrac{13p^{\omega(p)k+2}-5}{8}\right) \equiv0.
	\end{align*}
	\end{theorem}
		In the next theorem, we state a new family of congruences for $a_{17}(n)$.
		\begin{theorem}
				For $n$ be a positive integer, $p\ge5$ be a prime, and $\omega(p)$ be defined as in \eqref{e4.1}, we define
			\begin{align*}
			\xi_9(p):= c_9\displaystyle\left(\frac{5(p^2-1)}{8}\right) +p^2 \displaystyle\left(\frac{\frac{5}{4}(p^2-1)}{p}\right)_L,
		\end{align*}
			where 
		\begin{align*}
			\sum_{n=0}^{\infty}c_9(n)q^n:=\dfrac{f_2^8}{f_1}.
		\end{align*}
			The following hold:\\
			1. For $k\ge0$, if $p\nmid n$, then
				\begin{align*}
				a_{17}\displaystyle\left(9p^{\omega(p)(k+1)-1}n + \dfrac{45p^{\omega(p)(k+1)}-5}{8}\right) \equiv0.
			\end{align*}
			2. If $\xi_9(p)\not\equiv0$, then for $k\ge0$ with $\xi_9(p)\equiv p^2\displaystyle\left(\frac{-2n+\frac{5}{4}(p^2-1)}{p}\right)_L$,
			\begin{align*}
			a_{17}\displaystyle\left(9p^{\omega(p)k+2}n + \dfrac{45p^{\omega(p)k+2}-5}{8}\right) \equiv0.
		\end{align*}
	\end{theorem}
		In the following theorem, we establish a new family of congruences for $a_{20}(n)$.
	\begin{theorem}
			For $n$ be a positive integer, $p\ge5$ be a prime, and $\omega(p)$ be defined as in \eqref{e4.1}, we define
			\begin{align*}
			\xi_{10}(p):= c_{10}\displaystyle\left(\frac{p^2-1}{4}\right) +p \displaystyle\left(\frac{\frac{-(p^2-1)}{4}}{p}\right)_L,
		\end{align*}
		where 
		\begin{align*}
			\sum_{n=0}^{\infty}c_{10}(n)q^n:=f_1^4f_2.
		\end{align*}
		The following hold:\\
		1. For $k\ge0$, if $p\nmid n$, then
		\begin{align*}
			a_{20}\displaystyle\left(3p^{\omega(p)(k+1)-1}n + \dfrac{3p^{\omega(p)(k+1)}-3}{4}\right) \equiv0.
		\end{align*}
		2. If $\xi_{10}(p)\not\equiv0$, then for $k\ge0$ with $\xi_{10}(p)\equiv p\displaystyle\left(\frac{n-\frac{(p^2-1)}{4}}{p}\right)_L$,
			\begin{align*}
			a_{20}\displaystyle\left(3p^{\omega(p)k+2}n + \dfrac{3p^{\omega(p)k+2}-3}{4}\right) \equiv0.
		\end{align*}
	\end{theorem}
	In the theorem below, we derive a new family of congruences for $a_{23}(n)$.
\begin{theorem}
		For $n$ be a positive integer, $p\ge5$ be a prime, and $\omega(p)$ be defined as in \eqref{e4.1}, we define
		\begin{align*}
		\xi_{11}(p):= c_{11}\displaystyle\left(\frac{5(p^2-1)}{8}\right) +p \displaystyle\left(\frac{\frac{-5(p^2-1)}{4}}{p}\right)_L,
	\end{align*}
	where 
	\begin{align*}
		\sum_{n=0}^{\infty}c_{11}(n)q^n:=\dfrac{f_2^{10}}{f_1^5}.
	\end{align*}
	The following hold:\\
	1. For $k\ge0$, if $p\nmid n$, then
\begin{align*}
	a_{23}\displaystyle\left(3p^{\omega(p)(k+1)-1}n + \dfrac{15p^{\omega(p)(k+1)}-7}{8}\right) \equiv0.
\end{align*}
	2. If $\xi_{11}(p)\not\equiv0$, then for $k\ge0$ with $\xi_{11}(p)\equiv p\displaystyle\left(\frac{2n-\frac{5}{4}(p^2-1)}{p}\right)_L$,
	\begin{align*}
		a_{23}\displaystyle\left(3p^{\omega(p)k+2}n + \dfrac{15p^{\omega(p)k+2}-7}{8}\right) \equiv0.
	\end{align*}
\end{theorem}
	In the next theorem, we establish a new family of congruences for $a_{26}(n)$.
\begin{theorem}\label{t3.12}
		For $n$ be a positive integer, $p\ge5$ be a prime, and $\omega(p)$ be defined as in \eqref{e4.1}, we define
	\begin{align*}
	\xi_{12}(p):= c_{12}\displaystyle\left(\frac{(p^2-1)}{3}\right) +p \displaystyle\left(\frac{\frac{-(p^2-1)}{3}}{p}\right)_L,
\end{align*}
	where 
	\begin{align*}
		\sum_{n=0}^{\infty}c_{12}(n)q^n:=f_1^2f_2^3.
	\end{align*}
	The following hold:\\
	1. For $k\ge0$, if $p\nmid n$, then
		\begin{align*}
		a_{26}\displaystyle\left(3p^{\omega(p)(k+1)-1}n +p^{\omega(p)(k+1)}-1\right) \equiv0.
	\end{align*}
	2. If $\xi_{12}(p)\not\equiv0$, then for $k\ge0$ with $\xi_{12}(p)\equiv p\displaystyle\left(\frac{n-\frac{(p^2-1)}{3}}{p}\right)_L$,
	\begin{align*}
	a_{26}\displaystyle\left(3p^{\omega(p)k+2}n +p^{\omega(p)k+2}-1\right) \equiv0.
\end{align*}
\end{theorem}
\begin{proof}[Proof of Theorem \ref{t3.2}-\ref{t3.12}\unskip] Thanks to Newman's result Lemma \ref{l2.2} and the generating functions \eqref{e2.2}-\eqref{e2.12}, respectively, we skip the detailed proof of Theorem \ref{t3.2}-\ref{t3.12}.
\end{proof}
\begin{remark}
For example, it is easy to check that $\xi_{12}(5)\equiv0$. Using (1) of Theorem \ref{t3.12}, we find that
\begin{align*}
a_{26}(1875n+375j+624)\equiv 0,
\end{align*}
when $j\in$\{1,2,3,4\}.\\
Also, we can verify that $\xi_{12}(7)\equiv2\pmod3$ and $p\displaystyle\left(\frac{n-\frac{(p^2-1)}{3}}{p}\right)_L\equiv 2$ for $p=7$ when $n\equiv 0,5\pmod7$. Therefore,
\begin{align*}
	a_{26}(1029n+147l+48)\equiv 0,
\end{align*}
when $l\in\{0,5\}$.
\end{remark}
\section{Congruences for modulo 3 and 5 for $a_5(n)$}\label{s5}
In this section, to prove our results we employ theory of modular forms.
\begin{theorem}\label{t1.4} For every $n,\alpha\ge0$, we have
	\begin{align}\label{e1.8}
		a_5\displaystyle\left(3^{2\alpha+3}n+\frac{153\cdot3^{2\alpha}-1)}{8}\right)\equiv0\pmod3.
	\end{align}
\end{theorem}
\begin{proof}[Proof of Theorem \ref{t1.4}\unskip] From the generating function \eqref{e1.1}, we have 
	\begin{align}\label{e5.1}
		\sum_{n=0}^{\infty}a_5(n)q^n= \frac{f_2^{4}}{f_1^5}.
	\end{align}
	We first prove a congruence modulo 3 for $a_5(n)$, corresponding to the $\alpha=0$ case of \eqref{e1.8}. Let 
	\begin{align*}
		f_1(z):= \frac{\eta^4(2z)}{\eta^5(z)}\eta^{189}(z).
	\end{align*}
	The eta-quotient defined above meets the criteria of Theorem \ref{t2.1} and hence, is also holomorphic at all cusps of $\Gamma_0(4)$. Thus, $f_1(z)\in M_{94}(\Gamma_0(4), \chi )$ with character $\chi=\displaystyle\left(\tfrac{-2^4}{\bullet}\right)$. Furthermore, by \eqref{e5.1} and \eqref{e2.1}, the Fourier expansion of $f_1(z)$ satisfies
	\begin{align*}
		f_1(z)=\displaystyle\left(\sum_{n=0}^{\infty}a_5(n)q^{n+8}\right)\prod_{n=1}^\infty(1-q^{n})^{189}.
	\end{align*}
	Using proposition \ref{p2.4} and \ref{p2.5}, we obtain
	\begin{align*}
		(f_1\mid T_3^3)(z)\equiv\displaystyle\left(\sum_{n=0}^{\infty}a_5(27n+19)q^{n+1}\right)\prod_{n=1}^\infty(1-q^{n})^{7}\pmod 3.
	\end{align*}
	Since the weight of $f_1(z)$ is 94 and level is 4, the Sturm's bound for the associated space of modular forms is found to be 47. We use Mathematica to confirm that the Fourier coefficients of $f_1(z)\mid T_3^3$, up to this bound is congruent to 0 modulo 3. Therefore, using Theorem \ref{t2.2} we conclude that
	\begin{align}\label{e4.2}
		a_5\displaystyle\left(27n+19\right)\equiv0\pmod3,
	\end{align} 
	for all $n\ge0$.\\
	\indent Next, we construct 
	\begin{align*}
		h_2(z):=\frac{\eta^4(2z)}{\eta^5(z)}\eta^{189}(z)E^{189}_4= f_1(z)E^{189}_4
	\end{align*}
	and
	\begin{align*}
		h_3(z):=\frac{\eta^4(2z)}{\eta^5(z)}\eta^{1701}(z).
	\end{align*}
	Clearly, $E_4(z) \in M_4(\Gamma_0(1))$. It is easy to verify both $h_2(z)$ and $h_3(z)$ belongs to the same space $ M_{850}(\Gamma_0(4),\chi) $ with character  $\chi=\displaystyle\left(\tfrac{-2^4}{\bullet}\right)$.
	Since  $E_4(z)\equiv 1\pmod3$, the Fourier expansions of these two forms satisfy 
	\begin{align*}
		h_2(z)\equiv\displaystyle\left(\sum_{n=0}^{\infty}a_5(n)q^{n+8}\right)\prod_{n=1}^\infty(1-q^{n})^{189}\pmod 3
	\end{align*}
	and
	\begin{align*}
		h_3(z)\equiv\displaystyle\left(\sum_{n=0}^{\infty}a_5(n)q^{n+71}\right)\prod_{n=1}^\infty(1-q^{n})^{1701}\pmod 3.
	\end{align*}
	Now, applying $T_3$ operator twice to $h_2(z)$ and four times to $h_3(z)$, we obtain
	\begin{align*}
		(h_2\mid T_3^2)(z)\equiv\displaystyle\left(\sum_{n=0}^{\infty}a_5(9n+1)q^{n+1}\right)\prod_{n=1}^\infty(1-q^{n})^{21}\pmod 3
	\end{align*}
	and
	\begin{align*}
		(h_3\mid T_3^4)(z)\equiv\displaystyle\left(\sum_{n=0}^{\infty}a_5(81n+10)q^{n+1}\right)\prod_{n=1}^\infty(1-q^{n})^{21}\pmod 3.
	\end{align*}
	Using Mathematica, we verify that the Fourier coefficients of these two expansions agree modulo 3 up to Sturm's bound of 425. By Theorem \ref{t2.2}, we conclude that $	(h_2(z)\mid T_3^2)(z)\equiv (h_3(z)\mid T_3^4)(z)\pmod 3$. Thus,
	\begin{align*}
		a_5\displaystyle\left(9n+1\right)\equiv a_5\displaystyle\left(81n+10\right)\pmod3,
	\end{align*}
	for all $n\ge0$. By induction on above self similarity congruences and \eqref{e4.2}, we complete the proof of Theorem \ref{t1.4}. 
\end{proof}
\begin{theorem}\label{t1.5}
	For every non-negative integer n, we have
	\begin{align}\label{e1.9}
		a_5\displaystyle\left(5n+3\right)\equiv0\pmod5.
	\end{align}
\end{theorem}
\begin{proof}[Proof of Theorem \ref{t1.5}\unskip]
	Define
	\begin{align*}
		f_2(z):= \frac{\eta^4(2z)}{\eta^5(z)}\eta^{45}(z).
	\end{align*}
	Clearly, $f_2(z)\in M_{22}(\Gamma_0(4), \chi )$ with character $\chi=\displaystyle\left(\tfrac{-2^4}{\bullet}\right)$.\\
	\indent Moreover, by generating function \eqref{e5.1} and \eqref{e2.1} the Fourier expansion of $f_2(z)$ satisfies
	\begin{align*}
		f_2(z)=\displaystyle\left(\sum_{n=0}^{\infty}a_5(n)q^{n+2}\right)\prod_{n=1}^\infty(1-q^{n})^{45}.
	\end{align*}
	Applying $T_5$ operator to $f_2(z)$, we obtain
	\begin{align*}
		(f_2\mid T_5)(z)=\displaystyle\left(\sum_{n=0}^{\infty}a_5(5n+3)q^{n+1}\right)\prod_{n=1}^\infty(1-q^{n})^{9}\pmod 5.
	\end{align*}
	Using Mathematica, we verify that the Fourier coefficients of $f_2(z)\mid T_5$ up to Sturm's bound of 11 is congruent to 0 modulo 5. Hence, by Theorem \ref{t2.2} we can conclude that
	\begin{align*}
		a_5\displaystyle\left(5n+3\right)\equiv0\pmod5,
	\end{align*}
	for all $n\ge0$.
\end{proof}

\noindent\textbf{Acknowledgment.} The authors gratefully acknowledge Professor James A. Sellers for beneficial conversation during the preparation of this paper.
	
	\bigskip

	\noindent
	Department of Mathematics\\
	Ramanujan School of Mathematical Sciences\\
	Pondicherry University\\
	Puducherry- 605 014, India.\\
	\noindent Email: \texttt{tthejithamp@pondiuni.ac.in}
	
	\noindent	Email: \texttt{dr.fathima.sn@pondiuni.ac.in} (\Letter)


\begin{thebibliography}{99}
	\bibitem{am1}Amdeberhan, T., and Merca, M. (2025). From crank to congruences. arXiv preprint arXiv:2505.19991.
\bibitem{andr}Andrews, G. E. (1998). The theory of partitions (No. 2). Cambridge university press.

\bibitem{cl1} Corteel, S., and Lovejoy, J. (2004). Overpartitions. Transactions of the American Mathematical Society, 356(4), 1623-1635.
	\bibitem{gh}Gordon, B.,and Hughes, K. (2006, October). Ramanujan congruences for q (n). In Analytic Number Theory: Proceedings of a Conference Held at Temple University, Philadelphia, May 12–15, 1980 (pp. 333-359). Berlin, Heidelberg: Springer Berlin Heidelberg.


	
	\bibitem{rg1}Guadalupe, R. (2025). A note on congruences for the difference between even cranks and odd cranks. arXiv preprint arXiv:2506.16267.

		\bibitem{hs1}Hirschhorn, M. D., and Sellers, J. A. (2025). A Family of Congruences Modulo 7 for Partitions with Monochromatic Even Parts and Multi--Colored Odd Parts. arXiv preprint arXiv:2507.09752.
		
		\bibitem{knop}Knopp, M. I. (2008). Modular functions in analytic number theory (Vol. 337). American Mathematical Soc.
		\bibitem{kmf}Koblitz, N. (1993). Introduction to Elliptic Curves and Modular Forms (Vol. 97). Springer Science and Business Media.
		
		\bibitem{lig}Ligozat, G. (1975). Courbes modulaires de genre 1 (No. 43). Société mathématique de France.
		
		
		\bibitem{new 2}Newman, M. (1959). Construction and application of a class of modular functions (II). Proceedings of the London Mathematical Society, 3(3), 373-387.
		
		\bibitem{new62}Newman, M. (1962). Modular forms whose coefficients possess multiplicative properties, II. Annals of Mathematics, 75(2), 242-250.
			\bibitem{owom}Ono, K. (2004). The Web of Modularity: Arithmetic of the Coefficients of Modular Forms and $ q $-series: Arithmetic of the Coefficients of Modular Forms and Q-series (No. 102). American Mathematical Soc..
			\bibitem{rmf}Rankin, R. A. (1977). Modular forms and functions. Cambridge University Press.
		\bibitem{sellers}Sellers, J. A. (2025). Elementary Proofs and Generalizations of Recent Congruences of Thejitha and Fathima. arXiv preprint arXiv:2510.01572.
			\bibitem{onen}Sloane,N.J.A., et al.: The on-line encyclopedia of integer sequences (2023).https://oeis.org
		
		\bibitem{sm1}Smoot, N. A. (2021). On the computation of identities relating partition numbers in arithmetic progressions with eta quotients: an implementation of Radu's algorithm. Journal of Symbolic Computation, 104, 276-311.
		\bibitem{sb}Sturm, J. (2006, September). On the congruence of modular forms. In Number Theory: A Seminar held at the Graduate School and University Center of the City University of New York 1984–85 (pp. 275-280). Berlin, Heidelberg: Springer Berlin Heidelberg.
		
		
	
	\end{thebibliography}
\end{document}